\newcommand{\C}{\mathbb{C}}
\newcommand{\R}{\mathbb{R}}
\newcommand{\To}{\rightarrow}
\newcommand{\comps}{{f_1,\ldots,f_n}}
\newcommand{\vars}{{x_1,\ldots,x_n}}
\theoremstyle{plain}
\newtheorem{Thm}{Theorem}
\newtheorem{Conj*}{Conjecture}
\theoremstyle{remark}
\newcommand{\J}{{J}acobian\ }
\newcommand{\p}{polynomial\ }
\newcommand{\pk}{{P}inchuk\ }
\newcommand{\nz}{nonzero\ }
\newcommand{\Druz}{Dru{\.z}kowski\ }
\newcommand{\F}{F: \R^n \To \R^n}
\newcommand{\G}{G: \R^m \To \R^m}
\newcommand{\edr}{everywhere defined rational\ }
\begin{document}
\title{Reduction theorems for the 
Strong Real {J}acobian Conjecture}
\author{L. Andrew Campbell} 
\address{908 Fire Dance Lane \\
Palm Desert CA 92211 \\ USA}
\email{lacamp@alum.mit.edu}
\keywords{real rational map, real {J}acobian conjecture}
\subjclass[2010]{Primary 14R15; Secondary 14E05 14P10}

\begin{abstract}
Implementations of known reductions of
the Strong Real Jacobian Conjecture (SRJC),
to the case of an identity map plus 
cubic homogeneous or cubic linear terms, 
and to the case of 
gradient maps, are shown to preserve significant 
algebraic and geometric properties of the maps involved. 
That permits the separate formulation and reduction, 
though not so far the solution, of the SRJC for classes of 
nonsingular polynomial 
endomorphisms of real n-space 
that exclude the Pinchuk counterexamples 
to the SRJC, for instance those that induce rational 
function field extensions of a given fixed odd degree. 
\end{abstract}

\maketitle

\section{Introduction}\label{intro}

The \J Conjecture (JC) \cite{BCW82,ArnoBook} 
asserts that a polynomial map $F: k^n \To k^n$,
where $k$ is a field of characteristic zero, has a polynomial inverse if  
it is a Keller map \cite{Keller}, which means that its \J determinant, $j(F)$, is a \nz element of $k$.
The JC is still not settled for any $n > 1$ and any specific field $k$ of characteristic zero. 
It is well known that it would suffice to prove the JC for
 $k=\R$ and all  positive $n$. 
There are many  generalizations to endomorphisms of $\R^n$
\cite{RealCase,GlobU}. 
The most natural is the Strong Real \J Conjecture (SRJC), which 
asserts that a polynomial map $F: \R^n \To \R^n$,
has a real analytic  inverse if it is nonsingular, meaning that
$j(F)$, whether constant or not, vanishes nowhere on $\R^n$. 
However, Sergey Pinchuk exhibited a family 
of counterexamples for $n=2$ \cite{Pinchuk}. 
They are also counterexamples to the 
 Rational Real \J Conjecture (RRJC)  \cite{RRJC}, which
  is the extension of the SRJC to include everywhere 
defined rational nonsingular endomorphisms.
Everywhere defined means that each component of the map 
can be expressed as the quotient of two polynomials 
with a nowhere vanishing denominator. Any such $\F$ has finite fibers of size at most the degree of the
 associated  finite algebraic extension of rational function fields.

Let dex denote that extension degree, mfs the maximum fiber size, 
and sag the size of the automorphism group of  
the extension.  
While dex and mfs are not generally equal, 
they are always of the same parity. 
The conditions dex odd, mfs odd, and sag = 1 are all 
necessary for invertibility, and mfs = 1 or dex = 1 
is sufficient.  
All the Pinchuk counterexamples satisfy dex = 6, mfs = 2, and 
sag = 1 
 \cite{aspc,PMFF}. 
Thus the simplest unproved and unrefuted  challenge conjecture in this arena is that dex = 3 and sag = 1 is sufficient in the \p case.

Two known reduction procedures for the SRJC, 
to the case of maps
of cubic homogeneous, 
or even better cubic linear,  type 
\cite{Jagzhev,BCW82,Effective,ArnoBook} 
 and to the case of a symmetric \J matrix \cite{Meng},  
are shown here to preserve the numerical attributes 
dex, mfs, and sag.  

In consequence, the two reductions 
 can be applied to conjectures involving those attributes, such
as the above challenge conjecture.

\section{Background}\label{history}

There are two classic reductions of the ordinary JC 
to Yagzhev maps \cite{Jagzhev,BCW82} and to \Druz maps \cite{Effective}.
A Yagzhev map is a polynomial map of the form 
$F=X+H$, where $X=(\vars)$, and each 
component of $H$ is a cubic homogeneous polynomial in
the variables $\vars$. 
Yagzhev maps are also called maps of cubic homogeneous type. 
A \Druz map (or map of cubic linear type)
is a Yagzhev map, 
for which the components of $H$ are cubes of linear 
forms ($h_i=l_i^3$). In a departure from the convention in 
some other works, 
these definitions impose no restriction on $j(F)$, 
beyond the obvious $j(F)(0)=1$. 
Note, however, that a Yagzhev map $F=X+H$ is a Keller map 
if, and only if, $H$ has a \J matrix, $J(H)$, that  
 is nilpotent, since both assertions are 
just different ways of saying that the formal power series 
matrix inverse of $J(F)$ is polynomial.

Reduction theorem proofs use the strategy of 
transforming an original map into a map 
of the desired form in a succession of steps that preserve 
the truth value of certain key properties (and typically 
increase the number of variables). 

For the JC, $\C$ is usually selected as the ground field and 
the key properties are the Keller property 
and the existence of a polynomial inverse.
Such proofs  then apply over  
any ground field of  
 characteristic zero, including $\R$.
But the strategy and specific steps can be applied more 
generally than just to polynomial Keller maps and yields, 
for instance, a reduction of the SRJC to the cubic linear 
case \cite{Effective}. 
\Druz noted this explicitly, 
with the preserved properties being a nowhere vanishing \J determinant and  bijectivity.

Historical Note. 
At the 1997 conference in Lincoln, Nebraska, to honor 
the mathematical work of Gary H. Meisters, it was suggested 
by T. Parthasarathy that the SRJC reduction be attempted 
for the 1994 counterexample of Pinchuk. 
The challenge was taken up by Engelbert Hubbers, and 
in 1999 he demonstrated the existence of a 
counterexample to the SRJC of cubic linear type, 
coincidentally in dimension 1999. 
He started with exactly the specific Pinchuk map of 
total degree 25  circulated by Arno van den Essen in June 1994, 
which can be found in 
\cite{ArnoBook}. 
He then used a computer algebra system
to verify a human guided reduction path to a Yagzhev map
in dimension 203, then explicitly computed a
Gorni-Zampieri pairing \cite{GZpairs} 
to a \Druz map in dimension 1999, using sparse matrix 
representations as necessary.
These details are excerpted from a comprehensive 
unpublished note by Hubbers, which he made available. 

Remark. 
\Druz obviously did not use GZ pairing, since it
was unknown at the time.  
But it also preserves the same two key properties 
in the SRJC context.

More recently, reductions of the ordinary JC to the 
symmetric case have been considered, primarily over 
$\R$ and $\C$. 
Let $k$ denote a field of characteristic zero. 
In the JC world  a polynomial map 
$F:k^n \To k^n$ is often called symmetric, in a 
startling abuse of language, if $J(F)$ is a symmetric 
matrix. In that case, $F$ is the gradient map of a polynomial 
function $h:k^n \To k$ and $J(F)$ is the Hessian matrix of second 
order partial derivatives of $h$. So in the symmetric case, 
the JC becomes the Hessian conjecture (HC), namely that gradient 
maps of polynomials with constant nonzero Hessian 
determinant have polynomial inverses. 
In \cite{Meng}, Guowu Meng proves 
 the equivalence of the JC and the HC, using what 
he refers to as a trick. Meng's trick replaces a map 
$F=(\comps)$ in the variables $\vars$ by the map  
in the $2n$ variables $y_1,\ldots,y_n,\vars$ 
obtained by taking the gradient of the 
scalar function $y_1f_1 + \cdots + y_nf_n$. 
For $k=\R$, this construction works even for 
twice continuously differentiable maps. 
In the SRJC context it provides a one step reduction to the 
symmetric case that 
also preserves the Keller property 
in both directions. 

In \cite{SymmetricCase}, Michiel 
de Bondt and Arno van den Essen prove a more targeted 
reduction over $\C$, namely to symmetric Keller Yagzhev maps. 
The reduction process involves the use of $\sqrt{-1}$, and 
if applied to a real Keller map may yield a Yagzhev map 
that is not real. Interestingly, it has been shown 

that all complex symmetric Keller \Druz maps 
have polynomial inverses \cite{SymmetricDmaps,DruSymDru}.

\section{Stable and Segre equivalence}\label{equiv}

Two maps, $F$ and $G$, from a topological space $A$ to 
another one $B$, are called topologically equivalent 
if $F = h_B \circ G \circ h_A$, where $h_A$ and $h_B$ are 
homeomorphisms, respectively of 
$A$ to itself and of $B$ to itself. 
In other words, $F$ and $G$ are the same map up to 
coordinate changes in the domain and codomain by 
topological automorphisms. 
Topological stable equivalence for the set of all maps 
$F:\R^n \To \R^n$ in all dimensions $n>0$ is the 
equivalence relation generated by (1) topological 
equivalences, and (2) the equivalence of any map 
$F=(\comps)$, and its extension 
by fresh variables to  
$G=(f_1,\ldots,f_n,x_{n+1},\ldots,x_m)$
for any $m>n$.  
There are many other types of stable equivalence, such as 
real analytic or polynomial, each characterized by the type 
of automorphisms allowed for (global) coordinate changes. 
Stable equivalence, unqualified, will refer to the 
least restrictive, purely set theoretic, type, 
with all bijections allowed as automorphisms.

For brevity, call $F:\R^n \To \R^n$ (1) nondegenerate if 
$j(F)$ is not identically zero, 
(2) nonsingular if $j(F)\ne 0$ everywhere, and (3) a Keller 
map if $j(F)$ is a nonzero constant. 
These terms are meant to imply that $J(F)$, the  \J matrix 
of $F$, exists at 
every point of $\R^n$, and can be applied to any such 
$F$ if the corresponding restriction on 
$j(F)$ is satisfied.
For polynomial stable equivalence, the applicable 
automorphisms are \p maps with \p inverses, making 
it obvious that such equivalence preserves each of the
above three properties. All preservation properties
in this paper apply equally well in both directions. 
It also clearly preserves each of the properties
of being everywhere defined rational, polynomial,
injective, surjective, or bijective.
If the maps are nonsingular, it preserves the 
existence of a rational or polynomial inverse.

To verify this last assertion in the case of extension
by fresh variables, one checks the \J matrices to see
that the appropriate part of an inverse in the larger
number of variables is independent of the fresh
variables and restricts to an inverse in the smaller
number of variables.

The slightly more general and less familiar 
concept of birational stable equivalence allows the use of automorphisms that are 
\edr maps with \edr inverses.  
By inspection of the arguments in the polynomial case, 
one sees easily that birational stable equivalence 
has all the preservation properties listed above for the 
\p case, except that it need not preserve 
\p maps, \p inverses, or the Keller property. 

If $F=(\comps)$ is an everywhere defined rational map 
and is nondegenerate, then its components are algebraically 
independent 
over $\R$ in the field $\R(X)$ of rational 
functions in the coordinate variables $X=\vars$, 
and so they generate a subfield $\R(F) \subseteq \R(X)$  over $\R$, 
that is also a rational function field in $n$ variables over $\R$. 
Even without nonsingularity, the extension $\R(X)/\R(F)$  
permits the  definition of dex and sag as in  the introduction. 
The extension degree $d=\text{dex}$ is finite and equal to the 
degree of the minimal polynomial over $\R(F)$ of any $h \in \R(X)$ 
that is primitive, meaning that $h$ generates $\R(X)$ as a field over $\R(F)$. 
For such an $h$,  the powers $h^i$ for $i=0,\ldots,d-1$ are a basis 
for $\R(X)$ as a vector space over $\R(F)$. 
An automorphism of the extension is, by definition, a field automorphism 
of $\R(X)$ that fixes every element of $\R(F)$.  
So it is linear over $\R(F)$, and a multiplicative homomorphism, 
hence completely determined by its value on $h$.  
That value must be a root of the minimal polynomial  of $h$ and 
must lie in $\R(X)$, and any such root determines a unique 
automorphism of the extension.  
Thus sag is the number of such roots, which is therefore the same for any choice of $h$.

In another relaxation of assumptions, it suffices 
to assume that $F$ is an everywhere defined  nondegenerate 
rational map and an open map in order 
to conclude that it is quasifinite and that the 
maximum fiber size is at most dex. 

The main concern here is the case of  \edr nonsingular maps, 
for which all the assertions in the 
introduction are proved in \cite{RRJC}. 

\begin{Thm}
Assume $\F$ and $\G$ are birationally  stably 
equivalent. If either one is an everywhere defined 
rational nonsingular 
map, then so is the other, and each of the numerical
attributes dex, sag, and mfs has the same value
for both maps. 
\end{Thm}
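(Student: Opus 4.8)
The plan is to reduce the statement to the two generating operations of birational stable equivalence and to verify the three attributes separately for each. Birational stable equivalence is generated by (1) composition on both sides with \edr automorphisms of a common $\R^n$, and (2) extension by fresh variables; since all three attributes, as well as the property of being \edr and nonsingular, are transitive along a chain of such operations, it suffices to treat a single step of each kind. That an \edr nonsingular endpoint forces the same for the other map is already contained in the preservation properties recorded above for birational stable equivalence, and this in turn guarantees that dex, sag, and mfs are all defined for both maps, since a nonsingular map is open and hence quasifinite with finite fibers.

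For a single automorphism step, write $F = h_B \circ G \circ h_A$ with $h_A, h_B$ \edr automorphisms of $\R^n$. First I would observe that postcomposition with the birational automorphism $h_B$ does not change the generated subfield, because the components of $h_B \circ K$ are rational functions of the components of $K$ and conversely; hence $\R(F) = \R(G \circ h_A)$. Next, precomposition with $h_A$ induces a field automorphism $\sigma = h_A^*$ of $\R(X)$, and $\R(G \circ h_A) = \sigma(\R(G))$. Thus $\sigma$ is an automorphism of $\R(X)$ carrying the subfield $\R(G)$ onto $\R(F)$, so the extensions $\R(X)/\R(G)$ and $\R(X)/\R(F)$ are isomorphic; isomorphic extensions have equal degree (dex) and isomorphic automorphism groups (sag). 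For mfs I would use that $h_A, h_B$ are bijections: $F^{-1}(y) = h_A^{-1}(G^{-1}(h_B^{-1}(y)))$ has the same cardinality as $G^{-1}(h_B^{-1}(y))$, and as $y$ ranges over the codomain so does $h_B^{-1}(y)$, so the two maps realize the same set of fiber sizes.

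For a fresh-variable step, $G = (\comps, x_{n+1}, \ldots, x_m)$ with the $f_i$ involving only $\vars$, so $\R(x_1,\ldots,x_m) = \R(X)(x_{n+1},\ldots,x_m)$ and $\R(G) = \R(F)(x_{n+1},\ldots,x_m)$. For dex I would invoke the standard fact that adjoining the same algebraically independent transcendentals to both fields of a finite extension leaves the degree unchanged. For sag, I would check that a primitive element $h$ for $\R(X)/\R(F)$ stays primitive for the larger extension with the same minimal polynomial, and then argue that every root of that polynomial lying in $\R(x_1,\ldots,x_m)$ already lies in $\R(X)$: such a root is algebraic over $\R(F)$, hence over $\R(X)$, and $\R(X)$ is algebraically closed in the purely transcendental extension $\R(X)(x_{n+1},\ldots,x_m)$. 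By the root-counting description of sag given above, the two counts then agree. For mfs, the fiber of $G$ over $(b_1,\ldots,b_n,c_{n+1},\ldots,c_m)$ forces $x_j = c_j$ for $j > n$ and projects bijectively onto $F^{-1}(b_1,\ldots,b_n)$, so fiber sizes match.

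The main obstacle I expect is the sag computation for the fresh-variable step: preservation of the raw degree is routine, but equating the two automorphism groups requires the observation that no new roots of the minimal polynomial appear in the enlarged field, which rests on $\R(X)$ being relatively algebraically closed in its rational function field. Everything else reduces either to transport of structure by a single field automorphism or to elementary fiber bookkeeping.
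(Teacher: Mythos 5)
Your proposal is correct, and its overall architecture is the same as the paper's: reduce to the two generating operations, handle the automorphism step by transport of structure (postcomposition leaves the generated subfield $\R(F)$ unchanged, precomposition induces a field automorphism of $\R(X)$ carrying one subfield onto the other, so the two extensions are isomorphic), and handle the fresh-variable step by base change plus fiber bookkeeping; this matches the paper's cases i) and ii) and its fresh-variable analysis almost verbatim. The one point of genuine divergence is the sag computation for the fresh-variable step. The paper argues analytically: the minimal polynomial of the primitive element has coefficients independent of the fresh variables, on a Zariski open set a root in $\R(X,Z)$ is a real analytic function of those coefficients, hence its partial derivatives with respect to the fresh variables vanish identically and the root lies in $\R(X)$. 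You instead invoke the purely algebraic fact that $\R(X)$ is relatively algebraically closed in the purely transcendental extension $\R(X)(x_{n+1},\ldots,x_m)$, so any root of the minimal polynomial lying in the larger field, being algebraic over $\R(X)$, already lies in $\R(X)$. Your argument is cleaner and field-theoretic where the paper's is analytic; both are valid, and both rest on the same prior observation that sag equals the number of roots of the minimal polynomial of a primitive element lying in the top field. Your appeal to the standard degree-preservation fact for adjoining transcendentals is the same content as the paper's remark that tensoring with $\R(Z)$ is exact and carries the power basis to a power basis.
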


\begin{proof}
Only the equality of the numerical attributes needs checking.
And it needs to be checked only for the generating
equivalences.

Suppose first that $m>n$ and $G=(F,Z)$, with
$Z$ a list of fresh variables. 
For any $y \in \R^n$ and $z \in \R^{m-n}$, the 
fiber of $F$ over $y$ is the same size as the fiber of
$G$ over $(y,z)$, so mfs is preserved. 
A primitive element for $\R(X)/\R(F)$ is clearly  
also a primitive element for $\R(X,Z)/\R(G)$. 
Since the tensor product over $\R$ with $\R(Z)$ is an exact functor, 
the associated power basis for the first extension 
is also one for the second, and so dex is preserved. 
An automorphism of the extension $\R(X,Z)/\R(G)$ 
is determined by the image of the primitive element. 
That image is a root of the minimal polynomial 
for the primitive element. That polynomial has coefficients 
independent of the fresh variables in $Z$.  
On a Zariski open subset of $\R^m$, where the root is 
a real analytic function of the coefficients, the root is 
independent of the fresh variables. 
So the first order partials with respect to those variables 
are identically zero and the root lies in $\R(X)$. 
Thus the automorphism is uniquely the natural lift  of an 
automorphism of $\R(X)/\R(F)$, and so sag is preserved. 

Second, suppose that  $m=n$ and  $G=A \circ F \circ B$, 
with $A$ and $B$  
everywhere defined birational automorphisms.  
Viewing them as coordinate changes makes it clear that 
 mfs is preserved.  
It suffices to consider further only the 
special cases i) $G=A \circ F$ and ii) $G=F \circ B$, 
and $A$, $B$, and their inverses do not need 
to be defined everywhere. 

In case i) $A$ induces an automorphism of $\R(F)$, so $\R(F)$ and $\R(A \circ F)$ are the same subfield of $\R(X)$, hence the two extensions are the same and so have the same properties. 
In case ii) the two extensions are $\R(X)/\R(F)$ and 
 $\R(X)/\R(F \circ B)$, which are generally different extensions.
Take a primitive element $h$ for the first extension,
then apply the automorphism of $\R(X)$ induced by 
$B$ to $h$, its minimal polynomial over $\R(F)$,
and the roots of that polynomial in $\R(X)$.  The image of $h$
is primitive over $\R(F \circ B)$, the new polynomial
is  
irreducible there, and 
the roots of the two polynomials correspond.
So dex and sag are preserved. 
\end{proof}

Let $\F$ be a \p map satisfying $F(0)=0$. 
Then $H(x,t)=(1/t)F(tx)$ is \p and provides, 
for $0 \le t \le 1$, a homotopy 
between the linear part of $F$ and $F$ itself.

This Segre homotopy \cite{Segre} can be generalized in many ways, 
e.g. to the case of a complex map or parameter $t$ and to 
analytic or rational maps, not to 
mention formal and convergent power series. 
It is used here to define the concept  of Segre equivalence 
on the set of 
real analytic endomorphisms of $\R^n$ ($n>0$) that fix $0$. 
  It is the equivalence relation 
generated by declaring $\F$ equivalent to 
$G: \R^{n+1} \to \R^{n+1}$, with $G(x,t)=(F(tx)/t,t)$. 
In that case, for $t \ne 0$ consideration of the \J matrix
of $G$  shows that $j(G)(x,t)=j(F)(tx)$, a result that then 
also holds for $t=0$, by continuity.
So Segre equivalence preserves nondegeneracy, 
nonsingularity, and the Keller property. Again all preservation properties apply in both directions. 
It also preserves \p maps and \edr maps, 
because $G(x,1)=(F(x),1)$.  
For $t \ne 0$, the set $G^{-1}(y,t)=
\{(x/t,t)|x \in F^{-1}(ty)\}$ for any $y \in \R^n$.  
That implies that 
injectivity and surjectivity are preserved 
provided that $G$ is bijective on the set of 
points $(x,0)$, a condition equivalent to $j(F)(0) \ne 0$. 
In particular, for bijective nonsingular $F$ 
one has $G^{-1}(y,t)=(F^{-1}(ty)/t,t)$, 
and so \p and \edr inverses are also preserved. 

\begin{Thm}
Assume $\F$ and $\G$ are real analytic maps 
sending $0$ to $0$ 
and that they are Segre equivalent.
 If either one is an \edr nonsingular 
map, then so is the other, and  each of the numerical
attributes dex, sag, and mfs has the same value
for both maps. 
\end{Thm}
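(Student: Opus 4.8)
The plan is to mirror the proof of Theorem 1. By the discussion preceding the statement, Segre equivalence already preserves, in both directions, the property of being an \edr nonsingular map, so only the equalities of dex, sag, and mfs remain, and these need be checked only across one generating equivalence: $F$ on $\R^n$ versus $G$ on $\R^{n+1}$, where $G(x,t)=(F(tx)/t,t)$, say $G=(g_1,\ldots,g_n,t)$ with $g_i(x,t)=f_i(tx)/t$. I would first introduce the auxiliary map $\tilde G=(F,t):\R^{n+1}\To\R^{n+1}$, the extension of $F$ by one fresh variable $t$. This $\tilde G$ is birationally stably equivalent to $F$ (fresh-variable extension), so by Theorem 1 it is again \edr nonsingular and shares the values of dex, sag, and mfs with $F$. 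It then suffices to match the three attributes of $G$ with those of $\tilde G$, equivalently with those of $F$.

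For mfs I would invoke the explicit fiber description recorded just before the statement. For $t\ne 0$ one has $G^{-1}(y,t)=\{(x/t,t)\mid x\in F^{-1}(ty)\}$, so every fiber of $G$ over a point with $t\ne0$ is in bijection with a fiber of $F$, and as $(y,t)$ ranges over the locus $t\ne0$ each fiber of $F$ occurs (take $t=1$). On the exceptional slice $t=0$, continuity gives $G(x,0)=(J(F)(0)\,x,0)$, and $J(F)(0)$ is invertible because $j(F)$ vanishes nowhere, so those fibers are singletons and cannot raise the maximum. Since $F(0)=0$ forces at least one nonempty fiber, $\mathrm{mfs}(F)\ge1$, whence $\mathrm{mfs}(G)=\mathrm{mfs}(F)$.

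For dex and sag I would pass to the function fields, where both $G$ and $\tilde G$ induce extensions of the common overfield $\R(X,t)=\R(\vars,t)$, namely $\R(X,t)/\R(G)$ and $\R(X,t)/\R(F,t)$. The key device is the scaling field automorphism $\Phi$ of $\R(X,t)$ determined by $x_i\mapsto t x_i$ and $t\mapsto t$; its images are again algebraically independent generators (as $x_i=(t x_i)/t$), so $\Phi$ is a genuine automorphism, with inverse $x_i\mapsto x_i/t$. Because $\Phi(f_i)=f_i(tx)=t\,g_i$ while $\Phi$ fixes $t$, it carries $\R(F,t)$ onto $\R(t g_1,\ldots,t g_n,t)=\R(G)$. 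Thus $\Phi$ is an automorphism of the top field sending one base field onto the other, i.e. an isomorphism of the two extensions; isomorphic extensions have equal degree and isomorphic automorphism groups, so dex and sag agree for $G$ and $\tilde G$. Chaining through Theorem 1 then yields equality with the attributes of $F$.

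The main subtlety, and where I would take most care, is that the Segre scaling $x\mapsto tx$ is invertible only birationally, through the factor $1/t$. At the level of function fields this is harmless, since $\Phi$ is an honest automorphism of $\R(X,t)$, so the dex and sag comparisons go through cleanly. But the same scaling degenerates on the hyperplane $t=0$, so the set-theoretic fiber comparison cannot be run there as a naive coordinate change; the exceptional slice must be examined on its own, which is exactly the linear-limit computation $G(x,0)=(J(F)(0)\,x,0)$ used above. Verifying precisely that $\Phi$ maps $\R(F,t)$ onto $\R(G)$, and that the $t=0$ fibers are genuinely singletons, are the two checks on which the argument turns.
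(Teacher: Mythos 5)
Your proposal is correct and follows essentially the same route as the paper: the fiber formula for $t \ne 0$ plus the singleton fibers on the slice $t=0$ give mfs, and the scaling automorphism $x_i \mapsto tx_i$, $t \mapsto t$ of $\R(X,t)$ carries $\R(F,t)$ onto $\R(G)$, reducing dex and sag to the fresh-variable case of Theorem 1. Your explicit verification that $\Phi(\R(F,t)) = \R(G)$ and the linear-limit computation $G(x,0)=(J(F)(0)x,0)$ merely spell out details the paper leaves implicit.
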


\begin{proof}
Only the equality of the attributes needs checking, 
and only for the case $G(x,t)=(F(tx)/t,t)$. 
Fibers over points $(y,0)$ are of size $1$ 
and for $t \ne 0$ the fiber of $G$ over $(y,t)$ has
the same size as the fiber of $F$ over $ty$ ,
by the formula given above for the set $G^{-1}(y,t)$. 
So mfs is preserved. 

Now consider the automorphism of the field $\R(X,t)$
that sends $x_i$ to $tx_i$ and $t$ to itself.
It restricts to an isomorphism of $\R(F,t)$ onto $\R(G)$.

This is just an instance of special case ii) in the proof of the previous theorem. 
So dex  and sag have the same value for $G$ as for  
$(F,t)$, and hence, by the previous theorem, as for $F$.
\end{proof}

The coimage of a map $\F$ is the set $\R^n \setminus F(\R^n)$ 
of points in the codomain that are not in the image of $F$.
In the complex JC context, it is well known that the coimage 
has complex codimension at least two. Briefly, the reasoning is as follows.  Since the coimage is closed and constructible, if it has
codimension less than two it  
contains an irreducible hypersurface $h=0$, $h \circ F$ vanishes 
nowhere and so is constant, contradicting the algebraic 
independence of the components of $F$. 
In the SRJC and RRJC contexts, there are no parallel  results for 
the real codimension of the coimage, even if the map has dense 
image. 
The \pk maps, however, do have finite coimages, which are indeed of
codimension two in $\R^2$.
 
 \begin{Thm}
Assume $\F$ and $\G$ are everywhere defined rational 
nonsingular maps and that they are birationally stably or Segre equivalent.
Then  the codimension of the coimage is the same for both maps. 
\end{Thm}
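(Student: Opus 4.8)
The plan is to verify the assertion separately on each generating equivalence, since both birational stable equivalence and Segre equivalence are generated by moves of an explicit form. Throughout I would use that the coimage of an \edr nonsingular map is a closed semialgebraic set of well-defined dimension: such a map is a local diffeomorphism by nonsingularity, hence open, so its image is open and its coimage closed; and both are semialgebraic by Tarski--Seidenberg. The only fact about dimension I need is the standard one that an injective semialgebraic map carries a semialgebraic set to a set of the same dimension, so that in particular a semialgebraic homeomorphism preserves dimension, and hence codimension. Write $C$ for the coimage of $F$ and $d=\dim C$.

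For birational stable equivalence there are two generators. For the extension by fresh variables $G=(F,Z)$ with $Z$ of length $m-n$, the image is $F(\R^n)\times\R^{m-n}$, so the coimage is $C\times\R^{m-n}$; its dimension exceeds $d$ by exactly $m-n$, and hence its codimension in $\R^m$ equals the codimension of $C$ in $\R^n$. For a birational equivalence $G=A\circ F\circ B$, the maps $A$ and $B$ are \edr automorphisms, hence semialgebraic homeomorphisms of $\R^n$. Since $B$ is surjective, $F\circ B$ and $F$ have the same image; and since $A$ is a bijection of $\R^n$, the coimage of $G$ is $A(\R^n)\setminus A(F(\R^n))=A(C)$. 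As $A$ preserves dimension, the codimension is unchanged.

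For Segre equivalence the single generator is $G(x,t)=(F(tx)/t,t)$. Over the slice $t=0$ the map restricts to $x\mapsto J(F)(0)x$, which is bijective because $j(F)(0)\ne0$, so no point with $t=0$ lies in the coimage. For $t\ne0$ the formula for $G^{-1}(y,t)$ recorded above shows that $(y,t)$ is in the image exactly when $ty\in F(\R^n)$, i.e.\ when $y\in(1/t)F(\R^n)$; thus the slice of the coimage at height $t$ is $(1/t)C$. Consequently the coimage of $G$ equals $\{(c/t,t):c\in C,\ t\ne0\}$, the image of $C\times(\R\setminus\{0\})$ under the injective semialgebraic map $(c,t)\mapsto(c/t,t)$. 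Its dimension is therefore $d+1$, so its codimension in $\R^{n+1}$ is $n-d$, the codimension of $C$ in $\R^n$. (If $C$ is empty, both coimages are empty, consistently.)

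The step demanding the most care will be the Segre case, where one must confirm that the coimage acquires no spurious extra dimension from the degenerate slice $t=0$; this is precisely what nonsingularity at the origin guarantees, and it is the reason the standing hypotheses $F(0)=0$ and $j(F)(0)\ne0$ enter. Everything else reduces to bookkeeping of how images transform under bijections and products, together with the invariance of semialgebraic dimension under injective semialgebraic maps.
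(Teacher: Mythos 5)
Your proposal is correct and follows essentially the same route as the paper: the paper's entire proof consists of declaring the non-Segre cases trivial and observing that in the base Segre case the coimage of $G$ is exactly the set $\{(a/t,t) : a \in \text{coimage}(F),\ t \ne 0\}$, which is precisely the description you derive. You merely supply the details the paper leaves implicit, namely the semialgebraic-dimension bookkeeping and the check that the $t=0$ slice contributes nothing because $j(F)(0) \ne 0$.
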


\begin{proof}
Only the Segre case is not totally trivial, and  in the base Segre case 
the coimage of $G$ consists of the point $(a/t,t)$ for $a$ in the 
coimage of $F$ and $t \ne 0$. 
\end{proof}

\section{Gorni-Zampieri pairing}\label{gorzamp}

Let $f_i=x_i+l_i^3$ be the components of a map $F$ 
of cubic linear type in dimension $n$. 
It is customary to write $F$ 
in the compact form $F(x)=x+(Ax)^{*3}$, 
where $A$ is the matrix of coefficients of the linear 
forms and the exponent indicates componentwise cubing. 
Let $G$ be a map of cubic homogeneous type in dimension $m<n$. 
A GZ pairing between $G$ and $F$ is given by two matrices $B$ and $C$, respectively of sizes 
$m \times n$ and $n \times m$, satisfying $BC=I$, $\text{ker} B = \text{ker} A$, and $G(x)=BF(Cx)$ for all $x \in \R^m$.
The original definition \cite{GZpairs} writes $F$ as
$F(x) = x-(Ax)^{*3}$, 
but the different sign affects only some formulas not used here.

\begin{Thm}
If $G$ and $F$ are GZ paired, then they are polynomially
stably equivalent.
\end{Thm}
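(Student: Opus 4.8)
The plan is to exhibit an explicit chain of generating equivalences from $F$ down to $G$, built entirely from the linear algebra encoded in the pairing. First I would record the consequences of $BC=I$: the matrix $C$ is injective, $B$ is surjective, and—crucially—$\R^n$ splits as the internal direct sum $\operatorname{im}C \oplus \ker B$. Indeed $\dim\operatorname{im}C = m$ and $\dim\ker B = n-m$, while $Cx \in \ker B$ forces $0 = BCx = x$, so the intersection is trivial and the dimensions add to $n$. Consequently $P := CB$ is the projection of $\R^n$ onto $\operatorname{im}C$ along $\ker B$, with $I-P$ projecting onto $\ker B$, and $BD=0$ for any $D$ whose columns lie in $\ker B$.

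Next I would pass to coordinates adapted to this splitting. Fix a matrix $D$ of size $n \times (n-m)$ whose columns form a basis of $\ker B = \ker A$, so that $L(u,v) := Cu + Dv$ is a linear isomorphism $\R^m \times \R^{n-m} \To \R^n$. Conjugating $F$ by $L$ and using $\ker A = \ker B$ is the heart of the argument: since $Dv \in \ker A$, one has $A(Cu+Dv) = ACu$, hence
\[
F(L(u,v)) = Cu + Dv + (ACu)^{*3}.
\]
Applying $B$ to the right-hand side, using $BD=0$ and the pairing identity $G(u)=BF(Cu)=u+B(ACu)^{*3}$, shows the $\operatorname{im}C$-component transforms exactly by $u \mapsto G(u)$; the $\ker B$-component picks up the extra cubic homogeneous term $(I-P)(ACu)^{*3}$. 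Writing $Dw(u) = (I-P)(ACu)^{*3}$, which is possible because the right side lies in $\ker B = \operatorname{im}D$ and $D$ is injective, yields the triangular normal form
\[
\Phi(u,v) := L^{-1}\circ F \circ L\,(u,v) = \bigl(G(u),\, v + w(u)\bigr),
\]
with $w$ a (cubic homogeneous) polynomial map.

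Then I would clear the off-diagonal term by a polynomial automorphism. The shear $\beta(u,v) = (u,\, v+w(u))$ is a polynomial automorphism of $\R^n$ with polynomial inverse $(u,v)\mapsto(u,\,v-w(u))$, and by construction $\Phi = \widetilde{G}\circ\beta$, where $\widetilde{G}(u,v) = (G(u),v)$ is precisely the extension of $G$ by $n-m$ fresh variables. Assembling the chain: $F$ is polynomially equivalent to $\Phi$ via the linear change $L$; $\Phi$ is polynomially equivalent to $\widetilde{G}$ via $\beta$; and $\widetilde{G}$ is stably equivalent to $G$ by deletion of the fresh variables. Composing these three generating equivalences gives that $F$ and $G$ are polynomially stably equivalent.

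I expect the main obstacle—really the only nonroutine point—to be the verification that conjugation by $L$ produces the triangular form with first component exactly $G(u)$. This is precisely where the hypothesis $\ker A = \ker B$, rather than merely $BC=I$, is indispensable: it is what annihilates the $Dv$ contribution inside the componentwise cube, so that the cubic nonlinearity depends on $u$ alone and the $u$-block closes up into $G$. Everything else is linear bookkeeping, once the direct sum decomposition and the projection $CB=P$ are in hand.
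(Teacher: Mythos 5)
Your proposal is correct and follows essentially the same route as the paper: the paper's automorphisms $C'(x,z)=Cx+D(z)$ and $B'$ are exactly your adapted-coordinates isomorphism $L$ and its inverse, the identity $F(Cu+Dv)=Cu+Dv+(ACu)^{*3}$ via $\ker A=\ker B$ is the same key computation, and the final shear $(u,v)\mapsto(u,v+w(u))$ matches the paper's $(x,z+H(x))$. No gaps; the only cosmetic difference is that you phrase the splitting via the projection $P=CB$ while the paper works directly with the chosen basis of $\ker B$.
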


\begin{proof}

 Note that $\text{ker} C = 0$, $\text{Im} B = \R^m$, and that
$\R^n$ is the direct sum of $\text{Im} C$ and $\text{ker} B$.
Choose a linear isomorphism $D$ from $\R^{n-m}$ to 
$\text{ker} B$.
Let $E$ be its inverse. Consider the extension of $G$ by
fresh variables to 
$G'=(g_1,\ldots,g_m,z_{m+1},\ldots,z_n)$. 
Let $C'(x,z) = Cx+D(z) \in \R^n$ and $B'(x) =(Bx,E'(x))$,
where $E'$ is the linear extension of $E$ to $\R^n$ that is 
$0$ on $\text{Im} C$.

Note that both $B'$ and $C'$ are linear 
automorphisms of $\R^n$. 
Observe that $F(Cx+D(z))=Cx+D(z)+(ACx)^{*3}$. 
So $(B'\circ F\circ C')(x,z)=(G(x),z+E'((ACx)^{*3}))=
G'\circ (x,z+H(x))$, where $H$ is cubic homogeneous.
Since $(x,z+H(x))$ has the obvious inverse $(x,z-H(x))$,
it follows that $G$ and $F$ are polynomially stably equivalent.
\end{proof}

Remarks. The same reasoning works over any ground field $k$.
There is also nothing special about the use of $3$ as the 
exponent.  
All works just as well for power homogeneous and 
power linear maps of the same degree $d >1$.

In a GZ pairing  the rank of the map of cubic linear type, 
meaning the rank of the coefficient matrix $A$, 
is the same as the dimension $m$ of 
the map of cubic homogeneous type. 
In \cite{DRrk2} the SRJC is proved for all maps of cubic linear type and rank $2$. 
The heart of the proof is a theorem  proving 
that the SRJC is true 
for all maps of cubic homogeneous type in dimension $2$. 
These facts are of interest when considering 
structured counterexamples in higher dimensions.

Remarks. For reasons not clear to me,  
\cite{DRrk2} presents the results mentioned above for  
maps with an everywhere positive \J determinant, 
which is automatically true for nonsingular 
maps of cubic homogeneous type. Other results include the SRJC for all maps of cubic linear type in dimension $3$. 

The dimension $2$ results were later improved to 
cover \p maps with
components of degree at most $3$ \cite{RJCn=2cubic}, 
and then to \p maps with 
one component of degree at most $3$ \cite{RJCn=2pcubic}.

\section{Main results}\label{final}

\begin{Thm}\label{reduce-1}
There is an algorithm that transforms a nondegenerate, 
polynomial map 
$F:\R^n \To \R^n$ into a map $G:\R^m \To \R^m$ of cubic homogeneous type, 
where $m$ is generally much larger 
than $n$, using polynomially stable equivalences 
and a single Segre equivalence.
\end{Thm}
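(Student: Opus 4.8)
The plan is to normalize $F$, apply a single Segre equivalence to strip away every even-degree term, and then grind the surviving odd-degree terms down to degree three by polynomially stable degree reductions.

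First I would normalize using nondegeneracy. Because $j(F)\not\equiv 0$, there is a point $p$ at which $J(F)(p)$ is invertible; translating the domain to move $p$ to the origin, translating the codomain to send the resulting image of $0$ back to $0$, and postcomposing with the inverse of the derivative at $0$ are all polynomial automorphisms, hence polynomially stable equivalences. After them $F=X+H$ with $F(0)=0$, $J(F)(0)=I$, and $H$ of order at least two; write $H=\sum_{i\ge 2}H_i$ with $H_i$ homogeneous of degree $i$ and $d=\deg F$.

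Next comes the single Segre equivalence. Replacing $F$ by $G(x,t)=(F(tx)/t,\,t)$ gives $F(tx)/t=X+\sum_{i\ge 2}t^{i-1}H_i(x)$, so the contribution of $H_i$ is $t^{i-1}H_i(x)$, which is homogeneous of total degree $(i-1)+i=2i-1$ in the variables $(x,t)$. Every nonlinear term therefore has odd total degree at least three, while the linear part of $G$ is the identity on $\R^{n+1}$ (the first $n$ components contribute $x_j$ and the last is $t$). In particular the original quadratic part, and every other even-degree part, has been absorbed into odd-degree terms. This absorption is precisely the purpose of the Segre step, and it is used exactly once.

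Finally I would reduce the top degree. At this stage $G=I+K$ with $K=K_3+K_5+\cdots+K_{2d-1}$ a sum of homogeneous pieces of odd degree. If the top degree $e$ exceeds three, I factor a homogeneous cubic off each degree-$e$ monomial, introduce a fresh variable $y$ with the cubic-homogeneous relation $y=(\text{that cubic})$, and rewrite the monomial using $y$ in place of the cubic factor, lowering its degree from $e$ to $e-2$. Realized as an extension by a fresh variable followed by a polynomial automorphism of the shear form $y\mapsto y+(\text{cubic})$, each such substitution is a polynomially stable equivalence that preserves both the identity linear part and the property that every nonlinear term is odd and homogeneous. Iterating drives the top degree down through the odd values to three, terminating in a map of the form $X+H$ with $H$ cubic homogeneous. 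The hard part will be this last step: organizing the reduction so that it genuinely terminates, so that each substitution is realized by an extension and a polynomial automorphism rather than something ad hoc, and above all so that it never reintroduces an even-degree or lower-degree term that would spoil cubic homogeneity. The Segre step is what makes the bookkeeping tractable, since afterwards one need only collapse the odd degrees $5,7,\ldots$ to $3$ by cubic substitutions, and the underlying degree-lowering technique for Keller maps is already standard \cite{Jagzhev,BCW82,ArnoBook}.
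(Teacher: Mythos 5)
Your first two steps are fine, but the load-bearing part of the argument --- the final degree reduction --- is exactly the part you leave unproved, and the mechanism you sketch for it does not work. You propose to introduce a single fresh variable $y$, set its component to $y+c(x)$ for a cubic factor $c$ of a top-degree monomial $c\cdot m'$, and ``rewrite the monomial using $y$ in place of the cubic factor.'' That rewriting cannot be realized by an extension followed by polynomial automorphisms: a postcomposition automorphism may only combine \emph{codomain} coordinates, and the cofactor $m'(x)$ (of degree $e-3>0$) is not expressible in the codomain coordinates $f_1,\ldots,f_n,y+c$. This is precisely why the standard degree-lowering step uses \emph{two} fresh variables, producing $(f_1-(y+a)(z+b),f_2,\ldots,f_n,y+a,z+b)$ so that both factors become codomain coordinates. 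But if you fall back on that standard step after your Segre equivalence, it destroys the odd-homogeneity you just gained: it introduces the quadratic term $-yz$, the term $-az$ of degree $\deg a+1=4$, and a new component $z+b$ of even degree $e-3$. You would then need another Segre-type step to remove the even-degree debris, and you have already spent your one allowed Segre equivalence.

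The paper avoids this by reversing your order: it performs the (two-fresh-variable) degree reduction \emph{first}, while even-degree terms are still harmless, obtaining a cubic map; then normalizes; then applies the single Segre equivalence to get $(X+tQ+t^2C,t)$, in which $tQ$ is already cubic homogeneous and only the single quartic block $t^2C$ remains. That block is removed by $A_1\circ(X+tQ+t^2C,Y,t)\circ A_2$ with $A_1=(X-t^2Y,Y,t)$ and $A_2=(X,Y+C,t)$, yielding $(X-t^2Y+tQ,\,Y+C,\,t)$. This last trick succeeds with one block of fresh variables precisely because the cofactor of $C$ is $t^2$ and $t$ \emph{is} a codomain coordinate, and because $C$ is already cubic so the new component $Y+C$ is of the right form --- neither of which holds for the blocks $t^{i-1}H_i$ of degree $2i-1>3$ that survive in your ordering. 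So the gap is not mere bookkeeping: with Segre applied before degree reduction, there is no exhibited polynomially stable equivalence that lowers the degree while preserving odd homogeneity, and the argument does not close.
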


\begin{proof}
In each step below a map $F$ is replaced by a map $G$, 
which becomes the new $F$ for the next step. At each 
step both $F$ and $G$ are nondegenerate, 
since that property is preserved by the equivalences.

Step 1. Lower the degree. 
Suppose $F=(\comps)$. 
$F$ is polynomially stably equivalent to 
$(f_1 - (y+a)(z+b),f_2,\ldots,f_n,y+a,z+b)$, 
where $a,b$ are polynomials that depend only on 
$\vars$. Thus, if a term of $f_1$ has the form 
$ab$, with $\deg(a)>1$ and $\deg(b)>1$, it 
can be removed at the cost of introducing two new 
variables and some terms of degree less than $\deg(ab)$.
Repeating this for terms of maximum degree until 
there are no more maximal degree 
terms of the specified form in any component, one 
finally obtains a polynomial map $G$ (in a generally 
much higher dimension), all of whose terms are of degree 
no more than three.
This is a standard algorithm \cite{BCW82,ArnoBook}.  
There is flexibility in the choice of term to 
remove next, and one can opportunistically remove a 
product $ab$ that is not a single term, making choices to 
reach a cubic map more quickly. This step is a polynomial 
stable equivalence.

Step 2. Normalize. 
 $F$ is now cubic and (still) nondegenerate. 
Let $n$ be the current dimension. 
Choose $x_0 \in \R^n$ with $j(F)(x_0) \ne 0$. 
After suitable translations, 
$(J(F)(x_0))^{-1}F$ becomes a cubic map $G$, such  
that $G(0)=0$ and $G'(0)=J(G)(0)$ is the identity 
matrix $I$. This step is an affine (in the vector space sense) 
equivalence.

Step 3. Segre equivalence.
Now $F=X+Q+C$, where $Q$ and $C$ are, respectively, 
 the quadratic and cubic homogeneous components of $F$. 
Let $t$ be a new variable, and put 
$G=(X+tQ+t^2C,t)$. 
This is   
a \p Segre equivalence, as defined previously.

Step 4. Final step. 
Now $F=(X+tQ+t^2C,t)$, with $Q$ quadratic 
homogeneous and $C$ cubic homogeneous, 
and both independent of $t$. 
Define  two polynomial automorphisms
$A_1,A_2$ in $X,Y,t$, where $Y$ is 
a sequence of $n$ additional variables, 
by $A_1=(X-t^2Y,Y,t)$ and $A_2=(X,Y+C,t)$. 
Then $G=A_1 \circ (X+tQ+t^2C,Y,t) \circ A_2$ is 
the map of cubic homogeneous type 
$(X-t^2Y+tQ,Y+C,t)$. 
This step is a polynomial stable equivalence. 
\end{proof}

 The theorem and proof are valid over $\C$ as well as
over $\R$, and, indeed, 
more generally for Keller maps.  All proofs of reduction to
 cubic homogeneous type
start with reduction to degree $3$, 
followed by elimination of the quadratic terms.
The given proof most closely follows that of 
\Druz in \cite{Effective}, which explicitly 
allows for nonconstant \J determinants. 

The main point of the given proof is that for nonsingular \p maps, 
by the preservation results previously proved, the reduction preserves (in both directions)  
not only bijectivity, but also dex, mfs, sag, 
 the Keller property, and  the codimension of the coimage. 

So if it is applied to a \pk map, it  
yields a Yagzhev map $G$, for which $j(G)$ is not constant
and $J(G)$ is not unipotent. 	
Up to inessential details, Hubbers follows the above steps 
in the first part of his 1999 reduction, 
obtaining a cubic map in dimension $n=101$ 
and then a map of cubic homogeneous type 
in dimension $2n+1=203$. 
Hubbers' Yagzhev map in dimension $203$ is thus not Keller 
and satisfies dex = 6, mfs = 2, sag = 1, and 
has a coimage of codimension $2$.

A further reduction to a map of cubic linear type can
be effected using the method of \Druz in \cite{Effective} or the method of GZ pairing developed by Gianluca Gorni and 
Gaetano Zampieri in \cite{GZpairs}.
Since 
GZ pairing has been shown to be 
a polynomial stable equivalence, 
Hubbers' final \Druz map in dimension $1999$ has 
the same properties as those stated for 
his Yagzhev map in dimension $203$.

\begin{Thm}\label{reduce-2}
Any nonsingular $\mathcal{C}^2$ map $F:\R^n \To \R^n$,
is stably equivalent to a $\mathcal{C}^1$ 
nonsingular map $G:\R^{2n} \To \R^{2n}$  
with a symmetric \J matrix. 
The equivalence is birationally stable if $F$ is 
everywhere defined rational, and polynomially 
stable if $F$ is a \p Keller map.
\end{Thm}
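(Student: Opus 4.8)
The plan is to build $G$ by Meng's trick \cite{Meng}: adjoin $n$ fresh variables $y_1,\ldots,y_n$, form the scalar function $\varphi(x,y)=y_1f_1+\cdots+y_nf_n$, and set $G=\nabla\varphi:\R^{2n}\To\R^{2n}$. Since $F$ is $\mathcal{C}^2$, $\varphi$ is $\mathcal{C}^2$, so $G$ is $\mathcal{C}^1$ and $J(G)$ is the Hessian of $\varphi$; equality of mixed partials makes it symmetric. Thus the symmetric \J matrix comes for free, and the real content is to check that $G$ is nonsingular and to produce the stable equivalence of the right type.

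For nonsingularity I would split the $2n$ coordinates into the $y$-block and the $x$-block and read off $J(G)$ in block form: the block coupling $y$ with $y$ vanishes, the blocks coupling $y$ with $x$ are $J(F)$ and $J(F)^{\top}$, and the block coupling $x$ with $x$ is the symmetric matrix $M$ with entries $\sum_i y_i\,\partial^2 f_i/\partial x_k\partial x_l$. Swapping the two block columns turns this into a block triangular matrix, so $j(G)=(-1)^n j(F)^2$. Hence $j(G)$ vanishes exactly where $j(F)$ does and is a \nz constant exactly when $j(F)$ is, which gives nonsingularity of $G$ under the hypothesis and also records the preservation of nonsingularity and of the Keller property in both directions.

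For the equivalence, the point is that nonsingularity of $F$ does double duty. After a linear reordering of coordinates, $G$ has the form $(x,y)\mapsto(F(x),\,J(F)(x)^{\top}y)$, whereas the extension of $F$ by the fresh variables $y$ is $(x,y)\mapsto(F(x),y)$; the two differ only by the domain change $\sigma(x,y)=(x,\,J(F)(x)^{\top}y)$. Because $j(F)$ vanishes nowhere, $J(F)(x)$ is invertible at every point, so $\sigma$ is an automorphism with inverse $(x,y)\mapsto(x,(J(F)(x)^{\top})^{-1}y)$. Therefore $G$ is a coordinate change of the fresh-variable extension of $F$, hence stably equivalent to $F$ in the unqualified, set-theoretic sense.

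The only delicate step, and the place where the three cases separate, is the regularity of $\sigma^{-1}$, whose sole nontrivial ingredient is $(J(F))^{-1}=\operatorname{adj}(J(F))/j(F)$. If $F$ is \edr and nonsingular, then $j(F)$ is everywhere defined rational and nowhere zero, so $\sigma$ and $\sigma^{-1}$ are everywhere defined rational and the equivalence is birationally stable. If $F$ is a \p Keller map, then $j(F)$ is a \nz constant, the adjugate is polynomial, and $\sigma^{\pm1}$ are polynomial, so the equivalence is polynomially stable. I expect this bookkeeping to be the crux: it is precisely the Keller hypothesis of a constant (not merely nonvanishing) \J determinant that keeps $\sigma^{-1}$ polynomial, since a merely nonsingular polynomial $F$ would in general yield only a rational $\sigma^{-1}$.
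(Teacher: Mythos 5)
Your proposal is correct and follows essentially the same route as the paper: Meng's trick, the observation that $G$ factors as the fresh-variable extension of $F$ composed with the automorphism $(x,y)\mapsto(x,J(F)(x)^{\top}y)$ (plus a coordinate swap), and the adjugate formula $\operatorname{adj}(J(F))/j(F)$ to sort the inverse into the rational versus polynomial cases. The explicit block computation $j(G)=(-1)^n j(F)^2$ is a harmless addition; the paper instead gets nonsingularity of $G$ for free from the fact that stable equivalence preserves it.
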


\begin{proof}
This is Meng's trick with a reordering of the variables. 
Let $z=(x,v)$ be any point of $\R^{2n}$, 
with $x=(\vars)$ and $v=(x_{n+1},\ldots,x_{2n})$.
Let $h=x\cdot F(v)$,  where the dot denotes the standard inner product of $n$-vectors. 
Let $G$ be the gradient of the scalar function $h$.  
Then $G$ has a symmetric \J matrix 
and $G=(F(v),x\cdot J(F)(v))$, with the dot now  
denoting a vector matrix product and $J(F)$ 
the \J matrix of $F$.  
But 
$(F(v),x\cdot J(F)(v))=(F(x),v\cdot J(F)(x)) \circ (v,x)$
and $(F(x),v\cdot J(F)(x))
=(F(x),v)\circ (x,v\cdot J(F)(x))$.  
 Since $(x,v\cdot J(F)(x))$ has the $\mathcal{C}^1$ inverse 
$(x,v\cdot J(F)^{-1}(x))$, the composition 
$A=(x,v\cdot J(F)(x))\circ (v,x)$ is a 
$\mathcal{C}^1$ automorphism. 
Moreover, if $F$ is 
an \edr or \p Keller map, it is clear that $A$ has the claimed 
properties.
\end{proof}
 
This theorem reduces the entire RRJC, not just the SRJC, to the case of a symmetric \J matrix 
and preserves dex, sag, and mfs. 

It is natural to attempt to combine  
the two main results by 
applying  Theorem \ref{reduce-2} to a Yagzhev map. 
The resulting map is polynomial, 
with only its linear and cubic homogeneous components nonzero.
But its linear part is not the identity.

\section{Acknowledgments}\label{ack}

Thanks especially to Engelbert Hubbers for providing complete details 
on his reduction procedure \cite{pinchuk99}. 
Michiel deBondt helped simplify the 
last step of the proof 
of Theorem \ref{reduce-1}. 
And both he and Gianluca Gorni sent me proofs that 
GZ pairing preserves mfs, when I 
first raised the question.


\begin{thebibliography}{10}

\bibitem{BCW82}
Hyman Bass, Edwin~H. Connell, and David Wright.
\newblock The {J}acobian conjecture: reduction of degree and formal expansion
  of the inverse.
\newblock {\em Bull. Amer. Math. Soc. (N.S.)}, 7(2):287--330, 1982.

\bibitem{SymmetricCase}
Michiel~de Bondt and Arno van~den Essen.
\newblock A reduction of the {J}acobian conjecture to the symmetric case.
\newblock {\em Proc. Amer. Math. Soc.}, 133(8):2201--2205 (electronic), 2005.

\bibitem{SymmetricDmaps}
Michiel~de Bondt and Arno van~den Essen.
\newblock The {J}acobian conjecture for symmetric {D}ru\.zkowski mappings.
\newblock {\em Ann. Polon. Math.}, 86(1):43--46, 2005.

\bibitem{RJCn=2pcubic}
Francisco Braun and Jos{\'e}~Ruidival dos Santos~Filho.
\newblock The real {J}acobian conjecture on {$\R^2$} is true when one of the
  components has degree 3.
\newblock {\em Discrete Contin. Dyn. Syst.}, 26(1):75--87, 2010.

\bibitem{aspc}
L.~Andrew Campbell.
\newblock The asymptotic variety of a {P}inchuk map as a polynomial curve.
\newblock {\em Appl. Math. Lett.}, 24(1):62--65, 2011.

\bibitem{PMFF}
L.~Andrew Campbell.
\newblock {P}inchuk maps and function fields.
\newblock {\em J. Pure Appl. Algebra}, 218(2):297--302, 2014.

\bibitem{RRJC}
L.~Andrew Campbell.
\newblock On the rational real {J}acobian conjecture.
\newblock {\em Univ. Iagel. Acta Math.}, to appear, 2014.
 ArXiv 1210.0251.

\bibitem{Effective}
Ludwik~M. Dru{\.z}kowski.
\newblock An effective approach to {K}eller's {J}acobian conjecture.
\newblock {\em Math. Ann.}, 264(3):303--313, 1983.

\bibitem{DruSymDru}
Ludwik~M. Dru{\.z}kowski.
\newblock The {J}acobian conjecture: symmetric reduction and solution in the
  symmetric cubic linear case.
\newblock {\em Ann. Polon. Math.}, 87:83--92, 2005.

\bibitem{DRrk2}
Ludwik~M. Dru{\.z}kowski and Kamil Rusek.
\newblock The real {J}acobian conjecture for cubic linear maps of rank two.
\newblock {\em Univ. Iagel. Acta Math.}, 32:17--23, 1995.

\bibitem{ArnoBook}
Arno van~den Essen.
\newblock {\em Polynomial automorphisms and the {J}acobian conjecture}, volume
  190 of {\em Progress in Mathematics}.
\newblock Birkh\"auser Verlag, Basel, 2000.

\bibitem{GZpairs}
Gianluca Gorni and Gaetano Zampieri.
\newblock On cubic-linear polynomial mappings.
\newblock {\em Indag. Math. (N.S.)}, 8(4):471--492, 1997.

\bibitem{RJCn=2cubic}
Janusz Gwo{\'z}dziewicz.
\newblock The real {J}acobian conjecture for polynomials of degree 3.
\newblock {\em Ann. Polon. Math.}, 76(1-2):121--125, 2001.
\newblock Polynomial automorphisms and related topics (Krak{\'o}w, 1999).

\bibitem{pinchuk99}
Engelbert Hubbers.
\newblock Pinchuk's 2-dimensional example paired to a cubic linear
  1999-dimensional map.
\newblock Preprint dated November 6, 1999. Personal communication, 2010.

\bibitem{Jagzhev}
A.~V. Jag{\v{z}}ev.
\newblock On a problem of {O}.-{H}. {K}eller.
\newblock {\em Sibirsk. Mat. Zh.}, 21(5):141--150, 191, 1980.

\bibitem{Keller}
O.~H. Keller.
\newblock Ganze {C}remona-{T}ransformationen.
\newblock {\em Monatshefte der Mathematischen Physik}, 47:299--306, 1939.

\bibitem{Meng}
Guowu Meng.
\newblock Legendre transform, {H}essian conjecture and tree formula.
\newblock {\em Appl. Math. Lett.}, 19(6):503--510, 2006.

\bibitem{GlobU}
T.~Parthasarathy.
\newblock {\em {On Global Univalence Theorems}}, volume 977 of {\em Lecture
  Notes in Mathematics}.
\newblock {S}pringer {V}erlag, New York, 1983.

\bibitem{Pinchuk}
Sergey Pinchuk.
\newblock A counterexample to the strong real {J}acobian conjecture.
\newblock {\em Math. Z.}, 217(1):1--4, 1994.

\bibitem{RealCase}
John~D. Randall.
\newblock The real {J}acobian problem.
\newblock In {\em Proc. Sympos. Pure Math. 40}, pages 411--414, Providence, RI,
  1983. American Mathematical Society.

\bibitem{Segre}
Beniamino Segre.
\newblock Variazione continua ed omotopia in geometria algebrica.
\newblock {\em Ann. Mat. Pura Appl. (4)}, 50:149--186, 1960.

\end{thebibliography}
\end{document}